\author[P.~Leonetti]{Paolo Leonetti}
\address{Universit\`a ``Luigi Bocconi''\\Department of Statistics\\Milan, Italy}
\email{leonetti.paolo@gmail.com}
\urladdr{\url{http://orcid.org/0000-0001-7819-5301}}
\author[C.~Sanna]{Carlo Sanna}
\address{Universit\`a degli Studi di Torino\\Department of Mathematics\\Turin, Italy}
\email{carlo.sanna.dev@gmail.com}
\urladdr{\url{http://orcid.org/0000-0002-2111-7596}}
\keywords{Primes in residue classes, set of multiples.}
\subjclass[2010]{Primary: 11N13. Secondary: 11N05, 11N69.}
\title{A note on primes in certain residue classes}
\newtheorem{thm}{Theorem}
\newtheorem{lem}[thm]{Lemma}
\theoremstyle{definition}
\def\lcm{\operatorname{lcm}}
   \def\MR#1{}
\begin{document}

\maketitle
\thispagestyle{empty}

\begin{abstract} 
Given positive integers $a_1,\ldots,a_k$, we prove that the set of primes $p$ such that $p \not\equiv 1 \bmod{a_i}$ for $i=1,\ldots,k$ admits asymptotic density relative to the set of all primes which is at least $\prod_{i=1}^k \left(1-\frac{1}{\varphi(a_i)}\right)$, where $\varphi$ is the Euler's totient function.
This result is similar to the one of Heilbronn and Rohrbach, which says that the set of positive integer $n$ such that $n \not\equiv 0 \bmod a_i$ for $i=1,\ldots,k$ admits asymptotic density which is at least $\prod_{i=1}^k \left(1-\frac{1}{a_i}\right)$.
\end{abstract}

\section{Introduction}

The \emph{natural density} of a set of positive integers $\mathcal{A}$ is defined as
\begin{equation*}
\mathbf{d}(\mathcal{A}) := \lim_{x \to +\infty} \frac{\#(\mathcal{A} \cap [1, x])}{x} ,
\end{equation*}
whenever this limit exists.
The study of natural densities of sets of positive integers satisfying some arithmetic constraints is a classical research topic.
In particular, Heilbronn~\cite{heilbronn} and Rohrbach~\cite{MR1581555} proved, independently, the following result:

\begin{thm}\label{thm:original}
Let $a_1,\ldots,a_k$ be some positive integers. 
Then, the set $\mathcal{A}$ of positive integers $n$ such that $n\not\equiv 0\bmod{a_i}$ for $i=1,\ldots,k$ has natural density satisfying
\begin{equation*}
\mathbf{d}(\mathcal{A}) \geq \prod_{i=1}^k \left(1-\frac{1}{a_i}\right) .
\end{equation*}
\end{thm}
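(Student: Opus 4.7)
The plan is to proceed by induction on $k$. The set $\mathcal{A}$ is periodic modulo $\lcm(a_1,\ldots,a_k)$, so the density $\mathbf{d}(\mathcal{A})$ exists; for $k=1$ it equals exactly $1 - 1/a_1$, which establishes the base case.

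For the inductive step I would introduce the auxiliary set
$$\mathcal{A}' := \{n \in \mathbb{Z}^+ : a_i \nmid n \text{ for } i = 1,\ldots,k-1\},$$
so that $\mathcal{A} = \mathcal{A}' \setminus (\mathcal{A}' \cap a_k\mathbb{Z})$ and hence
$$\mathbf{d}(\mathcal{A}) = \mathbf{d}(\mathcal{A}') - \mathbf{d}(\mathcal{A}' \cap a_k\mathbb{Z}).$$
By the inductive hypothesis, $\mathbf{d}(\mathcal{A}') \geq \prod_{i=1}^{k-1}(1-1/a_i)$, so it suffices to prove the key inequality $\mathbf{d}(\mathcal{A}' \cap a_k\mathbb{Z}) \leq \mathbf{d}(\mathcal{A}')/a_k$, since combining it with the hypothesis yields $\mathbf{d}(\mathcal{A}) \geq (1-1/a_k)\,\mathbf{d}(\mathcal{A}') \geq \prod_{i=1}^{k}(1-1/a_i)$.

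To prove this key inequality, the main observation is that $\mathcal{A}'$ is \emph{divisor-closed}: if $n \in \mathcal{A}'$ and $m \mid n$, then any relation $a_i \mid m$ would force $a_i \mid n$, contradicting $n \in \mathcal{A}'$. Consequently, for every $n \in \mathcal{A}' \cap a_k\mathbb{Z}$ the quotient $q := n/a_k$ divides $n$ and therefore lies in $\mathcal{A}'$. The map $n \mapsto n/a_k$ is then an injection from $\mathcal{A}' \cap a_k\mathbb{Z} \cap [1,N]$ into $\mathcal{A}' \cap [1,N/a_k]$, which gives
$$\#\bigl(\mathcal{A}' \cap a_k\mathbb{Z} \cap [1,N]\bigr) \leq \#\bigl(\mathcal{A}' \cap [1,N/a_k]\bigr),$$
so dividing by $N$ and letting $N \to \infty$ yields the desired bound.

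I expect the main obstacle to be recognising the divisor-closure of $\mathcal{A}'$, since the ``$a_i \nmid n$'' conditions do not visibly interact with divisibility at first sight; once this structural property is in hand, the rest of the argument reduces to the elementary injection above together with the standard periodicity of sets defined by congruence exclusions. A potential alternative route would be a correlation inequality of Harris--FKG type (the events $\{a_i \mid n\}$ being pairwise positively correlated under the uniform measure on a period), but the divisor-closure approach seems more direct and self-contained.
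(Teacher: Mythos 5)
Your proof is correct. Note, however, that the paper does not actually prove Theorem~\ref{thm:original}: it is quoted from Heilbronn~\cite{heilbronn} and Rohrbach~\cite{MR1581555}, and the only related argument the paper supplies is the proof of Lemma~\ref{lem:multidimensional}, which instead imports Chung's multidimensional generalization~\cite{MR0005511} as a black box and extracts the bound for a box $\llbracket \mathbf{1}, \mathbf{b} \rrbracket$ by a partition-and-pigeonhole argument. Your route is the classical self-contained one: induct on $k$, peel off the condition $a_k \nmid n$, and control $\mathbf{d}(\mathcal{A}' \cap a_k\mathbb{Z})$ by the key structural observation that $\mathcal{A}'$, being the complement of a set of multiples, is divisor-closed, so that $n \mapsto n/a_k$ injects $\mathcal{A}' \cap a_k\mathbb{Z} \cap [1,N]$ into $\mathcal{A}' \cap [1,N/a_k]$; all the sets involved are periodic, so every density you invoke exists and the limit computation $\#(\mathcal{A}' \cap [1,N/a_k])/N \to \mathbf{d}(\mathcal{A}')/a_k$ goes through. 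This is essentially the original Heilbronn--Rohrbach argument, and it buys self-containedness and transparency (the inequality $\mathbf{d}(\mathcal{A}' \cap a_k\mathbb{Z}) \leq \mathbf{d}(\mathcal{A}')/a_k$ is exactly the positive-correlation statement you allude to, here derived directly from divisor-closure rather than from an FKG-type theorem); what it does not immediately give is the finite-box, multidimensional refinement of Lemma~\ref{lem:multidimensional} that the paper actually needs for Theorem~\ref{thm:main}, which is why the authors lean on Chung's result instead.
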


Generalizations of Theorem \ref{thm:original} were given, for instance, by Behrend~\cite{MR0026081} and Chung~\cite{MR0005511}. 
We refer to \cite{MR1414678} for a textbook expositions and to \cite{MR1366568, MR1438647, MR0026088, MR0422124} for related results.
It is worth noting that Besicovitch~\cite{MR1512943} proved that, given a sequence of positive integers $(a_i)_{i\ge 1}$, the set $\mathcal{A}$ of positive integers $n$ not divisible by any $a_i$ does not necessarily admit natural density. 
However, Davenport and Erd{\H o}s~\cite{MR0043835} proved that $\mathcal{A}$ always admits logarithmic density, i.e., the following limit exists:
\begin{equation*}
\lim_{x \to +\infty} \frac{1}{\log x} \sum_{n \,\in\, \mathcal{A} \,\cap\, [1, x]} \frac{1}{n} .
\end{equation*}

The purpose of this note is to prove a result for the set of primes analogous to Theorem~\ref{thm:original}.
Of course, to this aim, the natural density is not the right quantity to consider, since it is well known that the set of primes has natural density equal to zero.

Define the \emph{relative density} of a set of primes $\mathcal{P}$ as
\begin{equation*}
\mathbf{r}(\mathcal{P}) := \lim_{x \to +\infty} \frac{\#(\mathcal{P} \cap [1,x])}{x / \log x} ,
\end{equation*}
whenever this limit exists.
Furthermore, let $\varphi$ denote the Euler's totient function.

Our result is the following:

\begin{thm}\label{thm:main}
Let $a_1,\ldots,a_k$ be some positive integers. 
Then, the set $\mathcal{P}$ of primes $p$ such that $p\not\equiv 1\bmod{a_i}$ for $i=1,\ldots,k$ has relative density satisfying
\begin{equation*}
\mathbf{r}(\mathcal{P}) \geq \prod_{i=1}^k \left(1-\frac{1}{\varphi(a_i)}\right) .
\end{equation*}
\end{thm}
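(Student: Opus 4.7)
By Dirichlet's theorem on primes in arithmetic progressions, for each $d\ge 1$ and each $r$ with $\gcd(r,d)=1$, the set of primes $p\equiv r\bmod d$ has relative density $1/\varphi(d)$. Since the simultaneous congruences $p\equiv 1\bmod a_i$ for $i\in I\subseteq\{1,\ldots,k\}$ amount to the single congruence $p\equiv 1\bmod a_I$ with $a_I:=\lcm_{i\in I}a_i$, a finite inclusion--exclusion shows that $\mathbf{r}(\mathcal{P})$ exists and equals
\begin{equation*}
\mathbf{r}(\mathcal{P}) \;=\; \sum_{I\subseteq\{1,\ldots,k\}}\frac{(-1)^{|I|}}{\varphi(a_I)}.
\end{equation*}

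I would then interpret this sum inside $G:=(\mathbb{Z}/a\mathbb{Z})^*$, where $a:=\lcm(a_1,\ldots,a_k)$. Setting $H_i:=\{g\in G:g\equiv 1\bmod a_i\}$, each $H_i$ is the kernel of the canonical surjection $G\to(\mathbb{Z}/a_i\mathbb{Z})^*$, hence a subgroup of index $\varphi(a_i)$, and more generally $\bigcap_{i\in I}H_i$ has index $\varphi(a_I)$. A second inclusion--exclusion inside $G$ identifies the sum above with $|G\setminus\bigcup_{i}H_i|/|G|$, so Theorem~\ref{thm:main} reduces to the subgroup analogue of Theorem~\ref{thm:original},
\begin{equation*}
\frac{|G\setminus\bigcup_{i}H_i|}{|G|} \;\geq\; \prod_{i=1}^{k}\left(1-\frac{|H_i|}{|G|}\right).
\end{equation*}

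I would prove this last inequality by induction on $k$. Letting $B:=G\setminus\bigcup_{i<k}H_i$ and factoring $|G\setminus\bigcup_{i\leq k}H_i|/|G|=(|B|/|G|)(1-|H_k\cap B|/|B|)$, the inductive step reduces to the correlation inequality $|H_k\cap\bigcup_{i<k}H_i|/|H_k|\geq|\bigcup_{i<k}H_i|/|G|$, expressing positive correlation between the subgroup $H_k$ and the union of the other $H_i$'s. This is the main obstacle. I would handle it via the FKG inequality on a product of divisor lattices: after decomposing $G\cong\prod_{p\mid a}(\mathbb{Z}/p^{v_p(a)}\mathbb{Z})^*$ into its primary components (each a product of at most two cyclic groups), each $H_i$ becomes a product of ``divisibility''-type subgroups of the cyclic factors, corresponding to up-sets in the associated divisor lattices. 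The identity $\varphi(\gcd(m,n))\varphi(\lcm(m,n))=\varphi(m)\varphi(n)$ guarantees that the pushforward of the uniform measure on $G$ is log-modular (so the FKG lattice condition holds with equality), and FKG then delivers positive correlation between any two unions of up-sets of this form, in particular between $H_k$ and $\bigcup_{i<k}H_i$.
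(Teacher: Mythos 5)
Your proposal is correct, and while the first step coincides with the paper's, the combinatorial core is genuinely different. The paper also reduces everything to counting, inside $G=(\mathbb{Z}/\ell\mathbb{Z})^*$ with $\ell=\lcm(a_1,\ldots,a_k)$, the reduced residues $n$ with $n\not\equiv 1\bmod a_i$ for all $i$ (it gets $\mathbf{r}(\mathcal{P})=\#\mathcal{S}/\varphi(\ell)$ by summing Dirichlet over residue classes rather than by inclusion--exclusion, but this is the same identity as your $|G\setminus\bigcup_i H_i|/|G|$). From there the paper writes $G$ as a product of cyclic groups via primitive roots (handling $8\mid\ell$ separately with the $(-1)^{x_0}5^{x_1}$ decomposition), observes that each $H_i$ becomes a box condition $\mathbf{x}\equiv\mathbf{0}\bmod\mathbf{a}_i$ with $\|\mathbf{a}_i\|=\varphi(a_i)$, and then quotes Chung's multidimensional generalization of Heilbronn--Rohrbach, together with a tiling/pigeonhole argument (its Lemma~\ref{lem:multidimensional}) to pass from the full period box to $\llbracket\mathbf{1},\mathbf{b}\rrbracket$. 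You instead run an induction on $k$ whose inductive step is a correlation inequality between $H_k$ and $\bigcup_{i<k}H_i$, proved by FKG on the product of divisor lattices; this is self-contained modulo FKG and is close in spirit to the correlation-inequality treatments of Ruzsa and Ahlswede--Khachatrian that the paper only cites. Two caveats. First, your intermediate ``subgroup analogue'' is false for arbitrary subgroups of an abelian group: the three order-two subgroups of $(\mathbb{Z}/2\mathbb{Z})^2$ cover the group, so the left-hand side is $0$ while the right-hand side is $1/8$. Your argument does not actually need that generality --- the box/up-set structure of the kernels in primitive-root coordinates is exactly what rescues it, and you do route through it --- but the reduction should be stated only for these kernels, and the verification that the $2$-primary kernel is still a box in the $(-1)^{x_0}5^{x_1}$ coordinates should be made explicit, as the paper does. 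Second, the log-modularity via $\varphi(\gcd(m,n))\varphi(\lcm(m,n))=\varphi(m)\varphi(n)$ is correct but slightly more than you need: the pushforward of the uniform measure under $\mathbf{x}\mapsto(\gcd(x_j,b_j))_j$ is a product measure on a product of chains, so Harris's inequality already gives the positive correlation of up-sets. In exchange for being longer, your route avoids Chung's 1941 theorem as a black box.
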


\section{Preliminaries}

We begin by fixing some notations with the aim of simplifying the exposition.
Let $\mathbf{N}$ be the set of positive integers.
Put $\llbracket x, y \rrbracket := [x, y] \cap \mathbf{N}$ for all $x \leq y$, and let the other ``integral interval'' notations, like $\rrbracket x, y \rrbracket$, be defined in the obvious way. 
For vectors $\mathbf{x} = (x_1, \ldots, x_d)$ and $\mathbf{y} = (y_1, \ldots, y_d)$ belonging to $\mathbf{N}^d$, define $\|\mathbf{x}\| := x_1 \cdots x_d$ and $\llbracket \mathbf{x}, \mathbf{y} \rrbracket  := \llbracket x_1, y_1 \rrbracket \times \cdots \times \llbracket x_d, y_d \rrbracket$.
Also, all the elementary operations of addition, subtraction, multiplication, and division between vectors are meant component-wise, e.g., $\mathbf{x}\mathbf{y} := (x_1 y_1, \ldots, x_d y_d)$.
Let $\mathbf{0}$, respectively $\mathbf{1}$, be the vector of $\mathbf{N}^d$ with all components equal to $0$, respectively $1$, where $d$ will be always clear from the context.
Finally, write $\mathbf{x} \equiv \mathbf{y} \bmod \mathbf{m}$ if and only if $x_i \equiv y_i \bmod m_i$ for all $i = 1, \ldots, d$, where $\mathbf{m} = (m_1, \ldots, m_d) \in \mathbf{N}^d$, and write $\mathbf{x} \not\equiv \mathbf{y} \bmod \mathbf{m}$ if and only if $x_i \not\equiv y_i \bmod m$ for at least one $i \in \llbracket 1, d \rrbracket$.

We will need the following lemma, which might be interesting per se.

\begin{lem}\label{lem:multidimensional}
Let $d$ be a positive integers and let $\mathbf{a}_1, \ldots, \mathbf{a}_k, \mathbf{b} \in \mathbf{N}^d$ be some vectors such that $\mathbf{b} \equiv \mathbf{0} \bmod \mathbf{a}_i$ for $i=1,\ldots,k$. 
Then, the set $\mathcal{X}$ of all $\mathbf{x} \in \llbracket \mathbf{1}, \mathbf{b} \rrbracket$ such that $\mathbf{x} \not\equiv \mathbf{0} \bmod \mathbf{a}_i$ for $i=1,\ldots,k$ satisfies
\begin{equation*}
\#\mathcal{X} \geq \|\mathbf{b}\| \cdot \prod_{i=1}^k \left(1 - \frac1{\|\mathbf{a}_i\|}\right) .
\end{equation*}
\end{lem}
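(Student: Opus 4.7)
My plan is to proceed by induction on the dimension $d$, with Theorem~\ref{thm:original} as the base case. For $d=1$, the hypothesis $a_i\mid b$ for all $i$ forces $\lcm(a_1,\dots,a_k)\mid b$, so the set $\{x\in\mathbf{N}:a_i\nmid x\text{ for all }i\}$ is a union of complete residue classes modulo $b$; its proportion inside $\llbracket 1,b\rrbracket$ therefore equals its natural density, and Theorem~\ref{thm:original} supplies the bound $\prod_i(1-1/a_i)$.

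For the inductive step, I split each $\mathbf{x}\in\llbracket\mathbf{1},\mathbf{b}\rrbracket$ as $(\mathbf{y},z)$, with $\mathbf{y}$ its first $d-1$ coordinates, and write correspondingly $\mathbf{a}_i=(\mathbf{a}_i',\alpha_i)$ and $\mathbf{b}=(\mathbf{b}',\beta)$. Setting $U(z):=\{i:\alpha_i\mid z\}$ for each $z\in\llbracket 1,\beta\rrbracket$, a pair $(\mathbf{y},z)$ lies in $\mathcal{X}$ if and only if $\mathbf{y}\not\equiv\mathbf{0}\bmod\mathbf{a}_i'$ for every $i\in U(z)$. Applying the inductive hypothesis to each fiber yields
\[
\#\mathcal{X}\;\geq\;\|\mathbf{b}'\|\sum_{z=1}^{\beta}\prod_{i\in U(z)}\!\left(1-\frac{1}{\|\mathbf{a}_i'\|}\right)\;=\;\|\mathbf{b}'\|\sum_{z=1}^{\beta}\prod_{i=1}^{k}\!\left(1-\delta_i\,\mathbf{1}[\alpha_i\mid z]\right),
\]
where $\delta_i:=1/\|\mathbf{a}_i'\|\in[0,1]$. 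Since the target inequality reads $\beta\|\mathbf{b}'\|\prod_i(1-\delta_i/\alpha_i)$, the lemma reduces to the weighted one-dimensional inequality
\[
\sum_{z=1}^{\beta}\prod_{i=1}^{k}(1-\delta_i\,\mathbf{1}[\alpha_i\mid z])\;\geq\;\beta\prod_{i=1}^{k}(1-\delta_i/\alpha_i).
\]

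The hard part is this weighted inequality, which does not follow immediately from Theorem~\ref{thm:original}. I would prove it by a coupling argument. Let $B_1,\dots,B_k$ be independent Bernoulli variables with $\Pr(B_i=0)=\delta_i$. Since $1-\delta_i\,\mathbf{1}[\alpha_i\mid z]=\Pr(B_i=1\text{ or }\alpha_i\nmid z)$, independence gives
\[
\prod_{i=1}^{k}(1-\delta_i\,\mathbf{1}[\alpha_i\mid z])\;=\;\mathbb{E}_B\bigl[\mathbf{1}\{\alpha_i\nmid z\text{ for every }i\text{ with }B_i=0\}\bigr].
\]
Summing over $z$ and swapping the finite sum with the expectation reduces the inner count to the number of $z\in\llbracket 1,\beta\rrbracket$ avoiding the random subfamily $\{\alpha_i:B_i=0\}$; each such $\alpha_i$ divides $\beta$, so Theorem~\ref{thm:original} (applied as in the base case) bounds this count from below by $\beta\prod_{i:B_i=0}(1-1/\alpha_i)$, and taking the expectation yields exactly $\beta\prod_i(1-\delta_i/\alpha_i)$, completing the plan.
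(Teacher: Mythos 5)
Your proof is correct, but it takes a genuinely different route from the paper's. The paper derives the lemma from Chung's multidimensional theorem, which already gives the stated bound for the special box $\llbracket\mathbf{1},\mathbf{a}_1\cdots\mathbf{a}_k\rrbracket$: it partitions that box into $\|\mathbf{c}/\mathbf{b}\|$ translates of $\llbracket\mathbf{1},\mathbf{b}\rrbracket$, picks by pigeonhole a translate containing at least the average number of good points, and maps that translate injectively into $\mathcal{X}$ using $\mathbf{b}\equiv\mathbf{0}\bmod\mathbf{a}_i$. You instead give a self-contained induction on the dimension $d$, fibering over the last coordinate and reducing to the weighted one-dimensional inequality $\sum_{z\leq\beta}\prod_{i}(1-\delta_i\,\mathbf{1}[\alpha_i\mid z])\geq\beta\prod_i(1-\delta_i/\alpha_i)$, which you prove by randomizing the subfamily of moduli and averaging the Heilbronn--Rohrbach bound; this weighted inequality is essentially Ruzsa's probabilistic generalization, which the paper cites among related results but does not use. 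All your steps check out: the base case is valid because $\lcm(a_1,\ldots,a_k)\mid b$ makes the relevant set periodic with period dividing $b$, so the count in $\llbracket 1,b\rrbracket$ is exactly $b$ times the density; the fiber condition is correctly identified (including the degenerate case $\mathbf{a}_i'=\mathbf{1}$, where both sides vanish); and the final computation $\mathbb{E}\prod_{i:B_i=0}(1-1/\alpha_i)=\prod_i\bigl(1-\delta_i/\alpha_i\bigr)$ uses independence correctly and lands exactly on the target. What your approach buys is independence from Chung's result --- you effectively reprove it from Theorem~\ref{thm:original} --- while the paper's argument is shorter at the cost of invoking Chung as a black box.
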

\begin{proof}
Define $\mathbf{c} := \mathbf{a}_1 \cdots \mathbf{a}_k$ and let $\mathcal{Y}$ be the set of $\mathbf{y} \in \llbracket \mathbf{1}, \mathbf{c} \rrbracket$ such that $\mathbf{y} \not\equiv \mathbf{0} \bmod \mathbf{a}_i$ for $i=1,\ldots,k$.
Then, a result of Chung~\cite{MR0005511} says that
\begin{equation}\label{equ:Ybound}
\#\mathcal{Y} \geq \|\mathbf{c}\| \cdot \prod_{i=1}^k \left(1 - \frac1{\|\mathbf{a}_i\|}\right) .
\end{equation}
Clearly, $\mathcal{Y}$ can be partitioned in $\|\mathbf{c} / \mathbf{b}\|$ sets given by
\begin{equation*}
\mathcal{Y}_\mathbf{t} := \left\rrbracket \mathbf{b}(\mathbf{t} - \mathbf{1}), \mathbf{b} \mathbf{t} \right\rrbracket \cap \mathcal{Y},
\end{equation*}
for $\mathbf{t} \in \llbracket \mathbf{1}, \mathbf{c} / \mathbf{b} \rrbracket$.
Therefore, by (\ref{equ:Ybound}) there exists some $\mathbf{t} \in \llbracket \mathbf{1}, \mathbf{c} / \mathbf{b} \rrbracket$ such that
\begin{equation*}
\#\mathcal{Y}_\mathbf{t} \geq \frac{\#\mathcal{Y}}{\|\mathbf{c} / \mathbf{b}\|} \geq \|\mathbf{b}\| \cdot \prod_{i=1}^k \left(1 - \frac1{\|\mathbf{a}_i\|}\right) .
\end{equation*}
Moreover, for each $\mathbf{y} \in \mathcal{Y}_\mathbf{t}$ there exists a unique $\mathbf{x} \in \llbracket \mathbf{1}, \mathbf{b} \rrbracket$ such that $\mathbf{x} \equiv \mathbf{y} \bmod \mathbf{b}$.
Finally, since $\mathbf{b} \equiv \mathbf{0} \bmod \mathbf{a}_i$ for $i = 1, \ldots, k$, it follows easily that the map $\mathbf{y} \mapsto \mathbf{x}$ is an injection $\mathcal{Y}_\mathbf{t} \to \mathcal{X}$, so that $\#\mathcal{X} \geq \#\mathcal{Y}$ and the proof is complete.
\end{proof}

We will also use the following version of Dirichlet's theorem on primes in arithmetic progressions \cite[pag.~82]{Val97}.

\begin{thm}\label{thm:dirichlet}
For all coprime positive integers $a$ and $b$, the set of primes $p$ such that $p \equiv a \bmod b$ has relative density equal to $1 / \varphi(b)$.
\end{thm}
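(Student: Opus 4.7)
The plan is to follow the classical analytic proof via Dirichlet characters and $L$-functions, then convert the arithmetic information at $s=1$ into the claimed asymptotic count via a Tauberian theorem. First I would introduce the $\varphi(b)$ Dirichlet characters modulo $b$, together with the orthogonality relation
\begin{equation*}
\frac{1}{\varphi(b)}\sum_{\chi \bmod b} \overline{\chi(a)}\,\chi(n) \;=\; \begin{cases} 1 & \text{if } n \equiv a \bmod b \text{ and } \gcd(n,b)=1, \\ 0 & \text{otherwise.}\end{cases}
\end{equation*}
Writing $\pi(x;b,a)$ for the number of primes $p \leq x$ with $p \equiv a \bmod b$, this relation yields
\begin{equation*}
\pi(x;b,a) \;=\; \frac{1}{\varphi(b)} \sum_{\chi \bmod b} \overline{\chi(a)} \sum_{p \,\leq\, x} \chi(p) \;+\; O(1),
\end{equation*}
where the error term absorbs the finitely many primes dividing $b$. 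The task reduces to estimating $\sum_{p \leq x} \chi(p)$ for each character $\chi$.

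Next I would introduce the Dirichlet $L$-functions $L(s,\chi) = \sum_{n \geq 1} \chi(n)/n^s$, which for non-principal $\chi$ extend to analytic functions on $\Re(s) > 0$, while for the principal character $\chi_0$ inherit a simple pole at $s=1$ from the Riemann zeta function (they differ by removal of Euler factors at primes dividing $b$). The classical Prime Number Theorem, established via the non-vanishing of $\zeta(s)$ on the line $\Re(s)=1$ and a Tauberian theorem (Wiener--Ikehara applied to $-\zeta'/\zeta$), gives $\sum_{p \leq x}\chi_0(p) = \pi(x) + O(1) \sim x/\log x$. For non-principal $\chi$, the analogous argument applied to $-L'(s,\chi)/L(s,\chi)$ will yield $\sum_{p \leq x} \chi(p) = o(x/\log x)$, provided $L(s,\chi)$ has neither poles nor zeros on the line $\Re(s)=1$.

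The main obstacle, as expected, is the non-vanishing $L(1,\chi) \neq 0$ for non-principal $\chi$. For complex $\chi$ (i.e.\ $\chi \neq \overline{\chi}$), this follows from considering the product $Z(s) := \prod_{\chi \bmod b} L(s,\chi)$, whose Dirichlet coefficients are non-negative; if $L(1,\chi)=0$ for some complex $\chi$, then also $L(1,\overline{\chi})=0$ by conjugation, producing a double zero at $s=1$ that over-compensates the simple pole contributed by $\chi_0$, contradicting the non-negative coefficients (via a Landau-type convexity argument on the abscissa of convergence). The delicate case is that of real non-principal $\chi$: here I would use the standard device of studying the Dirichlet series $\zeta(s)L(s,\chi) = \sum_{n \geq 1} r_\chi(n)/n^s$ with $r_\chi(n) = \sum_{d \mid n} \chi(d) \geq 0$, noting that $r_\chi(n^2) \geq 1$, so the series diverges at $s=1/2$; if $L(1,\chi)=0$ then $\zeta(s)L(s,\chi)$ is entire, hence its Dirichlet series would converge everywhere by Landau's theorem, a contradiction.

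Once non-vanishing on $\Re(s)=1$ is secured, a routine application of the Wiener--Ikehara Tauberian theorem to $-L'(s,\chi)/L(s,\chi)$ gives $\sum_{p \leq x}\chi(p)\log p = o(x)$ for non-principal $\chi$, whence $\sum_{p \leq x}\chi(p) = o(x/\log x)$ by partial summation. Plugging this into the character expansion above, only the $\chi_0$-term survives asymptotically, giving
\begin{equation*}
\pi(x;b,a) \;=\; \frac{1}{\varphi(b)}\cdot \frac{x}{\log x} \;+\; o\!\left(\frac{x}{\log x}\right),
\end{equation*}
which is exactly the assertion $\mathbf{r}(\{p : p \equiv a \bmod b\}) = 1/\varphi(b)$.
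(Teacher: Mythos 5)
The paper does not actually prove this statement: it is quoted as a known result and attributed to de la Vall\'ee Poussin \cite{Val97}, so there is no internal proof to compare against. What you have written is the standard analytic proof of the prime number theorem for arithmetic progressions, and its architecture (orthogonality of characters, reduction to $\sum_{p\le x}\chi(p)$, non-vanishing of $L$-functions, a Tauberian theorem) is the right one; in particular your treatment of the hardest classical point, $L(1,\chi)\neq 0$ for real non-principal $\chi$ via Landau's theorem applied to $\zeta(s)L(s,\chi)$, is correct in outline.

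There is, however, one genuine gap. The conclusion is an asymptotic count $\pi(x;b,a)\sim x/(\varphi(b)\log x)$, i.e.\ a relative (natural) density, and for this the Tauberian step requires $L(s,\chi)\neq 0$ on the entire line $\Re(s)=1$, not merely at $s=1$. Both non-vanishing arguments you sketch (the conjugate-pair argument for complex $\chi$ and the $\zeta(s)L(s,\chi)$ argument for real $\chi$) establish only $L(1,\chi)\neq 0$; with that alone one obtains Dirichlet's theorem and the Dirichlet-density version of the statement, which is strictly weaker than what Theorem~\ref{thm:dirichlet} asserts. You write ``once non-vanishing on $\Re(s)=1$ is secured'' without securing it at the points $1+it$ with $t\neq 0$; the missing (routine but necessary) ingredient is the de la Vall\'ee Poussin--Mertens device $3+4\cos\theta+\cos 2\theta\geq 0$, applied to $\zeta(\sigma)^3\,|L(\sigma+it,\chi)|^4\,|L(\sigma+2it,\chi^2)|\geq 1$ for $\sigma>1$, which excludes zeros at $1+it$ for $t\neq 0$. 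A smaller issue: Wiener--Ikehara requires non-negative coefficients, so it cannot be applied directly to $-L'(s,\chi)/L(s,\chi)$ for non-principal $\chi$; the standard fix is to apply it to $\sum_{n\equiv a \bmod b}\Lambda(n)n^{-s}=-\varphi(b)^{-1}\sum_{\chi}\overline{\chi(a)}\,L'(s,\chi)/L(s,\chi)$, whose coefficients are non-negative, obtaining $\psi(x;b,a)\sim x/\varphi(b)$, and then to pass to $\pi(x;b,a)$ by partial summation.
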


\section{Proof of Theorem~\ref{thm:main}}

Put $\ell := \lcm(a_1, \ldots, a_k)$ and let $\ell = p_1^{e_1} \cdots p_d^{e_d}$ be the canonical prime factorization of $\ell$, where $p_1 < \cdots < p_d$ are primes and $e_1, \ldots, e_d$ are positive integers.
Furthermore, let $\mathcal{S}$ be the set of all $n \in \llbracket 1, \ell \rrbracket$ such that: $n$ is relatively prime to $\ell$, and $n \not\equiv 1 \bmod a_i$ for $i=1,\ldots,k$.
Thanks to Theorem~\ref{thm:dirichlet}, we have
\begin{equation}\label{equ:rP1}
\mathbf{r}(\mathcal{P}) = \lim_{x \to +\infty} \frac{\#(\mathcal{P} \cap [1, x])}{x / \log x} = \lim_{x \to +\infty} \sum_{s \in \mathcal{S}} \frac{\#\{p \leq x : p \equiv s \bmod \ell\}}{x / \log x} = \frac{\#\mathcal{S}}{\varphi(\ell)} ,
\end{equation}
hence the relative density of $\mathcal{P}$ exists, and all we need is the right lower bound for $\#\mathcal{S}$.

For the sake of clarity, let us first assume that $8 \nmid \ell$.
Later, we will explain how to adapt the proof for the case $8 \mid \ell$.
Let $g_i$ be a primitive root modulo $p_i^{e_i}$, for $i=1,\ldots,d$.
Note that $g_1$ exists when $p_1 = 2$ since $e_1 \leq 2$.
Put also $\mathbf{b} := (\varphi(p_1^{e_1}), \ldots, \varphi(p_d^{e_d}))$.
By the Chinese Remainder Theorem, each $n \in \llbracket 1, \ell \rrbracket$ which is relatively prime to $\ell$ is uniquely identified by a vector $\mathbf{x}(n) = (x_1(n), \ldots, x_d(n)) \in \llbracket \mathbf{1}, \mathbf{b} \rrbracket$ such that $n \equiv g_i^{x_i(n)} \bmod p_i^{e_i}$ for $i=1,\ldots,d$.
Let $a_i = p_1^{\alpha_{i,1}} \cdots p_d^{\alpha_{i,d}}$ be the prime factorization of $a_i$, where $\alpha_{i,1}, \ldots,\alpha_{i,d}$ are nonnegative integers, and define $\mathbf{a}_i := (\varphi(p_1^{\alpha_{i,1}}), \ldots, \varphi(p_d^{\alpha_{i,d}}))$ for $i=1,\ldots,k$.

At this point, it follows easily that $n \in \mathcal{S}$ if and only if $\mathbf{x}(n) \in \mathcal{X}$, where $\mathcal{X}$ is the set in the statement of Lemma~\ref{lem:multidimensional}.
Hence, the map $n \mapsto \mathbf{x}(n)$ is a bijection $\mathcal{S} \to \mathcal{X}$ and, as a consequence, $\#\mathcal{S} = \#\mathcal{X}$.
Since $\|\mathbf{b}\| = \varphi(\ell)$, $\|\mathbf{a}_i\| = \varphi(a_i)$, and $\mathbf{b} \equiv \mathbf{0} \bmod \mathbf{a}_i$ for $i=1,\ldots,k$, the desired claim follows from Lemma~\ref{lem:multidimensional} and (\ref{equ:rP1}).

The case $8 \mid \ell$ is a bit more trickier since there are no primitive roots modulo $2^e$, for $e \geq 3$ an integer.
However, the previous proof still works by putting 
\begin{equation*}
\mathbf{b} := (2, 2^{e_1-2}, \varphi(p_2^{e_2}), \ldots, \varphi(p_d^{e_d}))
\end{equation*}
and
\begin{equation*}
\mathbf{a}_i := (2^{\max(0,\alpha_{i,1} - 1) - \max(0, \alpha_{i,1} - 2)}, 2^{\max(0, \alpha_{i,1} - 2)}, \varphi(p^{\alpha_{i,2}}), \ldots, \varphi(p^{\alpha_{i,d}}))
\end{equation*}
for $i=1,\ldots,k$.
Now each $n \in \llbracket 1, \ell \rrbracket$ which is relatively prime to $\ell$ is uniquely identified by a vector $\mathbf{x}(n) = (x_0(n), \ldots, x_d(n)) \in \llbracket \mathbf{1}, \mathbf{b} \rrbracket$ such that $n \equiv (-1)^{x_0(n)} 5^{x_1(n)} \bmod 2^{e_1}$ and $n \equiv g_i^{x_i(n)} \bmod p_i^{e_i}$ for $i=2,\ldots,d$.
The rest of the proof proceeds as before.

\bibliographystyle{amsplain}

\end{document}